\documentclass[12pt]{article}
\usepackage{amsthm,amsmath,amssymb}
\usepackage[latin1]{inputenc}
\newtheorem{theorem}{Theorem}[section]

\newtheorem{lemma}[theorem]{Lemma}
\newtheorem{proposition}[theorem]{Proposition}
\newtheorem{corollary}[theorem]{Corollary}

\makeatletter

\@addtoreset{equation}{section}
\makeatother

\title{On Ricci Solitons and Harmonic Vector Fields in the Thurston Geometry $F^4$}

\author{Halima Boukhari\footnote{University Mustapha Stambouli Mascara, Faculty of Exact Sciences, Laboratoire LPQ3M, Mascara 29000, Algeria. Email: halima.boukhari@univ-mascara.dz},
Hadjer Okbani\footnote{University Mustapha Stambouli Mascara, Faculty of Exact Sciences, Mascara 29000, Algeria. Email: hadjer.okbani@univ-mascara.dz},
Ahmed Mohammed Cherif\footnote{University Mustapha Stambouli Mascara, Faculty of Exact Sciences, Mascara 29000, Algeria. Email: a.mohammedcherif@univ-mascara.dz}}

\date{}

\begin{document}
\maketitle
	
\begin{abstract}
In this paper, we consider a left-invariant Riemannian metric $g$ on the Lie group $F^4$.
We classify Ricci solitons on $(F^4,g)$ and show that all such solitons are expanding and
non-gradient. Moreover, we study the existence of harmonic maps from compact Riemannian
manifolds into $(F^4,g)$. Finally, we characterize a class of harmonic vector fields on
$(F^4,g)$.
\\
\textit{Keywords:} Lie group $F^4$, Ricci solitons, Harmonic vector fields.\\
\textit{Mathematics Subject Classification 2020:  58E99, 58E20, 53C43.}
\end{abstract}

\section{Introduction}

Let \(M^4\) be a \(4\)-dimensional Lie group endowed with a left-invariant Riemannian metric \(g\).
We write \(\nabla\), \(R\), and \(\mathrm{Ric}\) for the associated Levi--Civita connection,
Riemann curvature tensor, and Ricci curvature, respectively.
For any vector fields \(X,Y,Z \in \Gamma(TM^4)\), the curvature operator is given by
\begin{equation}
R(X,Y)Z = \nabla_X \nabla_Y Z - \nabla_Y \nabla_X Z - \nabla_{[X,Y]} Z.
\end{equation}
Moreover, if \(\{e_i\}_{i=1}^4\) is an orthonormal left-invariant frame on \(M^4\), the Ricci curvature satisfies
\begin{equation}
\mathrm{Ric}(X,Y) = \sum_{i=1}^{4} g\bigl(R(X,e_i)e_i,\,Y\bigr).
\end{equation}
A {Ricci soliton} on \(M^4\) is a Riemannian metric \(g\) for which there exists a smooth vector field
\(\xi\) satisfying
\begin{equation}
\mathrm{Ric} + \frac{1}{2}\mathcal{L}_{\xi} g = \lambda g,
\end{equation}
where \(\mathcal{L}_{\xi} g\) denotes the Lie derivative of \(g\) along \(\xi\) and
\(\lambda \in \mathbb{R}\) is a constant.
The soliton is called {shrinking}, {steady}, or {expanding} according as
\(\lambda>0\), \(\lambda=0\), or \(\lambda<0\), respectively.
If the vector field \(\xi\) is the gradient of a smooth function \(f\) on \(M^4\), that is,
\(\xi=\nabla f\), then the Ricci soliton is said to be {gradient} with potential function \(f\),
and the defining equation reduces to
\begin{equation}
\mathrm{Ric} + \mathrm{Hess}\, f = \lambda g.
\end{equation}
A Ricci soliton is called {Einstein} if the associated vector field \(\xi\) is either identically zero
or a Killing vector field, equivalently if \(\mathcal{L}_{\xi} g = 0\), in which case
\(\mathrm{Ric} = \lambda g\).
A Ricci soliton is said to be {non-trivial} if the underlying manifold \((M^4,g)\) is not Einstein.
Such soliton vector fields play a significant role in differential geometry, imposing strong constraints
on both the geometric and topological structure of the manifold.\\
We define one of the four-dimensional Thurston model geometries as follows \cite{CL,F}.
The space \(F^4\) is defined as the product manifold
$
F^4 = \mathbb{R}^2 \times \mathbb{H}^2,
$
whose full isometry group is given by
$
G = \mathbb{R}^2 \rtimes SL_2(\mathbb{R}),
$
where \(\rtimes\) denotes the semidirect product. A left-invariant frame \(\{e_i\}_{i=1}^4\) on \(F^4\) is given by
\begin{equation}\label{ei}
\begin{aligned}
e_{1} &= \sqrt{t}\,\partial_x, \\
e_{2} &= \frac{s}{\sqrt{t}}\,\partial_x + \frac{1}{\sqrt{t}}\,\partial_y, \\
e_{3} &= 2t\,\partial_s, \\
e_{4} &= 2t\,\partial_t.
\end{aligned}
\end{equation}
The corresponding dual coframe \(\{\theta^i\}_{i=1}^4\) is
\begin{equation}
\begin{aligned}
\theta_{1} &= \frac{1}{\sqrt{t}}\,dx - \frac{s}{\sqrt{t}}\,dy, \\
\theta_{2} &= \sqrt{t}\,dy, \\
\theta_{3} &= \frac{1}{2t}\,ds, \\
\theta_{4} &= \frac{1}{2t}\,dt.
\end{aligned}
\end{equation}
With respect to this coframe, the left-invariant Riemannian metric
\[
g = \theta_{1}^{2} + \theta_{2}^{2} + \theta_{3}^{2} + \theta_{4}^{2},
\]
has matrix representation
\[
(g_{ij}) =
\begin{pmatrix}
\dfrac{1}{t} & -\dfrac{s}{t} & 0 & 0 \\[6pt]
-\dfrac{s}{t} & \dfrac{s^{2}+t^{2}}{t} & 0 & 0 \\[6pt]
0 & 0 & \dfrac{1}{4t^{2}} & 0 \\[6pt]
0 & 0 & 0 & \dfrac{1}{4t^{2}}
\end{pmatrix}.
\]
Let \(\varphi \colon (M,g) \to (N,h)\) be a smooth map between Riemannian manifolds.
For a compact domain \(D \subset M\), the {energy} of \(\varphi\) is defined by
\begin{equation}\label{Energy}
E(\varphi;D) = \frac{1}{2} \int_D |d\varphi|^{2}\, v^{g},
\end{equation}
where \(v^{g}\) denotes the Riemannian volume element on \((M,g)\) and
\(|d\varphi|\) is the Hilbert--Schmidt norm of the differential \(d\varphi\).
A map \(\varphi\) is said to be {harmonic} if it is a critical point of the
energy functional \eqref{Energy}. In this case, the associated Euler--Lagrange
equation is
\begin{equation}\label{Tension}
\tau(\varphi) = \mathrm{Tr}_{g} \nabla d\varphi
= \sum_{i=1}^{m} \Big( \nabla^{\varphi}_{e_i} d\varphi(e_i)
- d\varphi(\nabla^{M}_{e_i} e_i) \Big) = 0,
\end{equation}
where \(\{e_i\}_{i=1}^{m}\) is a local orthonormal frame on \((M,g)\),
\(\nabla^{M}\) denotes the Levi--Civita connection on \(M\), and
\(\nabla^{\varphi}\) is the pull-back connection on the bundle
\(\varphi^{-1}TN\) (see \cite{2,3}).\\
Let \((TM,g^{S})\) denote the tangent bundle of a Riemannian manifold \((M,g)\),
endowed with the Sasaki metric \(g^{S}\), defined by
\[
g^{S}(X^{h},Y^{h}) = g^{S}(X^{v},Y^{v}) = g(X,Y) \circ \pi,
\qquad
g^{S}(X^{h},Y^{v}) = 0,
\]
for all \(X,Y \in \Gamma(TM)\), where \(\pi \colon TM \to M\) is the canonical
projection, and \(X^{h}\) (resp.\ \(X^{v}\)) denotes the horizontal (resp.\
vertical) lift of a vector field \(X\) on \(M\) to \(TM\) (see \cite{4}).\\
For a vector field \(X \colon (M,g) \to (TM,g^{S})\), the tension field is given by
\begin{equation}\label{TensionSasaki}
\tau(X) = \bigl( \mathrm{Tr}_{g} R(X,\nabla \cdot)\cdot \bigr)^{h}
+ \bigl( \mathrm{Tr}_{g} \nabla^{2} X \bigr)^{v},
\end{equation}
where \(R\) denotes the Riemann curvature tensor of \((M,g)\) and
\(\nabla^{2} X\) is the Hessian of \(X\) (see \cite{5}).
Consequently, \(X\) defines a harmonic map from \((M,g)\) to \((TM,g^{S})\)
if and only if
\[
\mathrm{Tr}_{g} R(X,\nabla \cdot)\cdot = 0
\quad \text{and} \quad
\mathrm{Tr}_{g} \nabla^{2} X = 0.
\]
A smooth vector field \(X\) on \((M,g)\) is called a {harmonic section}
if it is a critical point of the vertical energy functional
\begin{equation}\label{VerticalEnergy}
E_{v}(X;D) = \frac{1}{2} \int_{D} |\nabla X|^{2}\, v^{g}.
\end{equation}
The corresponding Euler--Lagrange equation is given by
\[
\overline{\Delta} X = \mathrm{Tr}_{g} \nabla^{2} X = 0.
\]
The study of Ricci solitons and harmonic vector fields on Lie groups has attracted significant attention in recent years due to their deep connections with geometric analysis and global differential geometry. Our study is motivated by recent results in the literature. Notably, the work of Li, Cherif, and Xie \cite{LCX} provides a comprehensive treatment of Ricci solitons and harmonic structures on four-dimensional nilpotent Lie groups. Their approach illustrates the rich interactions between Ricci solitons and harmonic vector fields, and inspires our investigation of similar phenomena on \(F^4\).\\
The purpose of this work is to study the existence and classification of Ricci solitons
on the \(4\)-dimensional Lie group \(F^4\) endowed with a left-invariant Riemannian metric $g$,
and to investigate their interaction with harmonic mappings.
Finally, we analyze harmonic vector fields on \(F^4\) under both interpretations:
as harmonic sections of the tangent bundle and as harmonic maps into the tangent bundle
equipped with the Sasaki metric.

\section{Existence of Ricci solitons on \((F^4, g)\)}

In the following result, we provide an explicit characterization of Ricci solitons on the Thurston geometry \((F^4, g)\).
It gives a concrete description of the soliton vector field in terms of the constant \(\lambda\).

\begin{theorem}\label{Ricci th}
A vector field $\xi$ on the Thurston geometry $(F^4, g)$ is a Ricci soliton if and only if
\begin{eqnarray*}
\xi&=&  \frac{1}{2}\left[\left(c_2-12\right)x+2c_3\,y+2c_5\right]\frac{\partial}{\partial x}
        -\frac{1}{2}\left[c_1\,x+\left(c_2+12\right)y-2c_4\right]\frac{\partial}{\partial y}\\
   &&     +\frac{1}{2}\left[c_1\left(s^2-t^2\right)+2c_2\,s+2c_3\right]\frac{\partial}{\partial s}
        +\left(c_1\,s+c_4\right)t\frac{\partial}{\partial t},
\end{eqnarray*}
for some constants $c_{1},...,c_5 \in \mathbb{R}$. Moreover, the Ricci soliton $(F^4, g, \xi, \lambda)$ is expansive with $\lambda = -6$.
\end{theorem}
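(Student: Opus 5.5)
The plan is to reduce the soliton equation $\mathrm{Ric}+\frac12\mathcal{L}_\xi g=\lambda g$ to an explicit linear PDE system for the coordinate components of $\xi$ in $(x,y,s,t)$, and then integrate it.

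\textbf{Step 1: curvature of the frame.} From \eqref{ei}, a direct computation gives the only nonzero brackets:
\[
[e_3,e_2]=2e_1,\qquad [e_4,e_1]=e_1,\qquad [e_4,e_2]=-e_2,\qquad [e_4,e_3]=2e_3 .
\]
Since $\{e_i\}$ is orthonormal and left-invariant, the Koszul formula
\[
2g(\nabla_{e_i}e_j,e_k)=g([e_i,e_j],e_k)-g([e_j,e_k],e_i)+g([e_k,e_i],e_j)
\]
gives every $\nabla_{e_i}e_j$ as a constant combination of the frame. From this I compute $R(e_i,e_j)e_k$ and then $\mathrm{Ric}(e_i,e_j)$, which is a constant matrix. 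I expect it to be non-Einstein, with a block structure coming from the $e_1,e_2$ plane coupled to the $e_3,e_4$ plane. Converting to coordinates with the coframe gives $\mathrm{Ric}=\sum_{i,j}\mathrm{Ric}(e_i,e_j)\,\theta_i\theta_j$. This yields explicit coordinate components $R_{ab}(s,t)$.

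\textbf{Step 2: the PDE system.} Write $\xi=\xi^1\partial_x+\xi^2\partial_y+\xi^3\partial_s+\xi^4\partial_t$ with unknown functions of $(x,y,s,t)$. Using
\[
(\mathcal{L}_\xi g)_{ab}=\xi^c\partial_c g_{ab}+g_{cb}\,\partial_a\xi^c+g_{ac}\,\partial_b\xi^c
\]
and the matrix $(g_{ij})$ above, the soliton equation becomes ten first-order linear PDEs:
\[
(\mathcal{L}_\xi g)_{ab}=2(\lambda g_{ab}-R_{ab}).
\]
I will integrate them in this order.

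1. The $(s,s)$, $(t,t)$ and $(s,t)$ equations involve only $\xi^3,\xi^4$ and their $s,t$ derivatives. Together they say that $(\xi^3,\xi^4)$ is a conformal-type field on the upper half plane with weight $1/(4t^2)$, up to an inhomogeneous term.
2. The mixed equations $(x,s)$, $(x,t)$, $(y,s)$, $(y,t)$ should force $\xi^3,\xi^4$ to be independent of $x,y$. They also force $\xi^1,\xi^2$ to depend on $s,t$ only through controlled terms, which I expect to vanish.
3. The $(x,x)$, $(x,y)$, $(y,y)$ equations then make $\xi^1,\xi^2$ affine in $(x,y)$, with coefficients tied to $\partial_s\xi^3$ and $\xi^4/t$.

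\textbf{Step 3: the constants and $\lambda$.} Matching polynomial dependence in $s$ and $t$ gives $\xi^3,\xi^4$ in the $\mathfrak{sl}_2$-type form
\[
\xi^3=\tfrac12\bigl[c_1(s^2-t^2)+2c_2s+2c_3\bigr],\qquad \xi^4=(c_1s+c_4)t,
\]
plus a possible particular part. The compatibility conditions between the $\mathbb{H}^2$ block and the $\mathbb{R}^2$ block then pin $\lambda$. In particular, the trace parts of the $(x,x)$ and $(t,t)$ equations must agree, and I expect this to force $\lambda=-6$. It also produces the inhomogeneous terms $-6x\,\partial_x$ and $-6y\,\partial_y$. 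Finally I verify the converse by substituting the stated $\xi$ back in. Since $\lambda=-6<0$, the soliton is expanding.

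\textbf{Main obstacle.} The main obstacle is Step 2: organizing the ten coupled equations so that the integrability conditions (equality of mixed partials) eliminate all non-polynomial freedom and force $\lambda=-6$. My first attempt will be to differentiate the $(x,s)$ and $(x,t)$ equations in $x$ to show $\partial_x^2\xi^3=\partial_x^2\xi^4=0$. I will then use the $(s,s)-(t,t)$ and $(s,t)$ equations as a Cauchy--Riemann-type system for $(\xi^3,\xi^4/t)$.
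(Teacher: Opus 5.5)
Your reduction (write $\mathrm{Ric}+\frac12\mathcal{L}_\xi g=\lambda g$ as a linear first-order system and integrate) is the same as the paper's. The paper uses orthonormal-frame components $\alpha_i$ and then simply asserts the general solution. The step that fails is your Step~3, and with it your closing converse check.

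The form $\xi^3=\frac12[c_1(s^2-t^2)+2c_2s+2c_3]$, $\xi^4=(c_1s+c_4)t$, with $c_2$ and $c_4$ independent, is excluded by your own item~1 of Step~2. Those equations are
\[
\partial_s\xi^3-\frac{\xi^4}{t}=\lambda+6,\qquad \partial_t\xi^4-\frac{\xi^4}{t}=\lambda+6,\qquad \partial_t\xi^3+\partial_s\xi^4=0 .
\]
At $\lambda=-6$ the first says $\partial_s\xi^3=\xi^4/t$, i.e.\ $c_1s+c_2=c_1s+c_4$. Done correctly, the integration gives $\xi^4=(c_1s+c_2)t$, and $c_4$ survives only in the translation $c_4\partial_y$. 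The solutions are exactly $\xi=-6(x\partial_x+y\partial_y)+K$ with $K$ in the $5$-dimensional Killing algebra. Its dilation generator is $x\partial_x-y\partial_y+2s\partial_s+2t\partial_t$, not $x\partial_x-y\partial_y+2s\partial_s$ plus an independent $t\partial_t$.

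The field as stated differs from this by $(c_4-c_2)\,t\partial_t$. Since $\mathcal{L}_{t\partial_t}g=-\theta_1^2+\theta_2^2-2\theta_3^2$, substituting it leaves
\[
\mathrm{Ric}+\tfrac12\mathcal{L}_\xi g+6g=(c_2-c_4)\Bigl[\tfrac12\bigl(\theta_1^2-\theta_2^2\bigr)+\theta_3^2\Bigr].
\]
Hence the statement is false in both directions:
\begin{itemize}
\item Taking $c_4=1$ and all other $c_i=0$ gives $-6x\partial_x+(1-6y)\partial_y+t\partial_t$, which is not a soliton for any $\lambda$.
\item $-6x\partial_x+(1-6y)\partial_y$ is a soliton, since $\partial_y$ is Killing, but it is not of the stated form.
\end{itemize}
The paper's proof has the same defect. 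Its $\alpha_4=\frac12(c_1s+c_4)$ gives $E_{11}=-E_{22}=\frac12(c_2-c_4)$ and $E_{33}=c_2-c_4$ in its own system. So no argument proves the statement verbatim. Your method proves the corrected version, with $(c_1s+c_2)t\,\partial_t$ in place of $(c_1s+c_4)t\,\partial_t$.

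A smaller point: you do not need the $(x,x)$ equation to pin $\lambda$, because the three displayed equations already force $\lambda=-6$.
\begin{itemize}
\item The first minus the second, together with the third, are Cauchy--Riemann equations for $(\xi^3,\xi^4)$, not for $(\xi^3,\xi^4/t)$. So $\xi^4$ is harmonic in $(s,t)$.
\item The second equation gives $\xi^4=(\lambda+6)\,t\ln t+t\,\phi(s)$, with $x,y$ as parameters.
\item The flat Laplacian of this in $(s,t)$ is $(\lambda+6)/t+t\phi''$, which vanishes only if $\lambda=-6$ and $\phi''=0$.
\end{itemize}
Equivalently, for each fixed $(x,y)$, $\xi^3\partial_s+\xi^4\partial_t$ is a homothetic field of $\bigl(\mathbb{H}^2,\frac{ds^2+dt^2}{4t^2}\bigr)$ with factor $\lambda+6$. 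It is therefore Killing, since $\mathbb{H}^2$ is not flat. Cauchy--Riemann then yields the $\mathfrak{sl}_2$ form directly, with the single constant $c_2$ shared by $\xi^3$ and $\xi^4$.
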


The proof of Theorem~\ref{Ricci th} relies on the following result.

\begin{lemma}\label{1}
For the Riemannian manifold $(F^4,g)$, the non-vanishing components of the Levi--Civita connection $\nabla$ with respect to the frame $\{e_1,e_2,e_3,e_4\}$ are given by
\[
\begin{array}{lll}
\nabla_{e_{1}} e_{1}=e_{4}, & \nabla_{e_{2}} e_{1}=e_{3}, & \nabla_{e_{3}} e_{1}=-e_{2},\\[2pt]
\nabla_{e_{1}} e_{2}=e_{3}, & \nabla_{e_{2}} e_{2}=-e_{4}, & \nabla_{e_{3}} e_{2}=e_{1},\\[2pt]
\nabla_{e_{1}} e_{3}=-e_{2}, & \nabla_{e_{2}} e_{3}=-e_{1}, & \nabla_{e_{3}} e_{3}=2e_{4},\\[2pt]
\nabla_{e_{1}} e_{4}=-e_{1}, & \nabla_{e_{2}} e_{4}=e_{2}, & \nabla_{e_{3}} e_{4}=-2e_{3}.
\end{array}
\]
\end{lemma}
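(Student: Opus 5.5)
The plan is to compute the Levi--Civita connection directly from the Lie brackets of the left-invariant orthonormal frame \eqref{ei} via the Koszul formula. Since $g(e_i,e_j)=\delta_{ij}$ is constant, the Koszul formula reduces to
\[
2g(\nabla_{e_i}e_j,e_k)=g([e_i,e_j],e_k)-g([e_j,e_k],e_i)+g([e_k,e_i],e_j),
\]
so the whole computation is determined by the structure constants.

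\textbf{Step 1: brackets.} We compute the brackets using $[fA,hB]=f\,A(h)\,B-h\,B(f)\,A+fh[A,B]$ for coordinate fields $A,B$. None of the coefficients depends on $x$ or $y$, so $[e_1,e_2]=0$. The coefficients of $e_1$ and $e_3$ do not depend on $s$ through $e_1$, and $e_1$ does not differentiate $2t$, so $[e_3,e_1]=0$. Next, $[e_3,e_2]=2t\,\partial_s(s/\sqrt t)\,\partial_x=2\sqrt t\,\partial_x=2e_1$. We also have $[e_4,e_1]=2t\cdot\tfrac{1}{2\sqrt t}\,\partial_x=e_1$. Differentiating $t^{-1/2}$ in both coefficients gives $[e_4,e_2]=-e_2$. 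Finally, $[e_4,e_3]=2t\cdot 2\,\partial_s=2e_3$. Hence the only nonzero brackets are
\[
[e_3,e_2]=2e_1,\qquad [e_4,e_1]=e_1,\qquad [e_4,e_2]=-e_2,\qquad [e_4,e_3]=2e_3 .
\]

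\textbf{Step 2: Koszul formula.} We evaluate the Koszul formula for all triples $(i,j,k)$, setting $\Gamma_{ij}^k=g(\nabla_{e_i}e_j,e_k)$. Metric compatibility gives the skew-symmetry $\Gamma_{ij}^k=-\Gamma_{ik}^j$, which halves the work. As a sample, for $(1,1,4)$ we get
\[
2\Gamma_{11}^4=-g([e_1,e_4],e_1)+g([e_4,e_1],e_1)=2,
\]
so $\nabla_{e_1}e_1=e_4$. For $(2,1,3)$ we get
\[
2\Gamma_{21}^3=-g([e_1,e_3],e_2)+g([e_3,e_2],e_1)=0+2,
\]
so $\nabla_{e_2}e_1=e_3$. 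The remaining entries are obtained the same way.

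\textbf{Step 3: checks.} We cross-check the table using torsion-freeness, $\nabla_{e_i}e_j-\nabla_{e_j}e_i=[e_i,e_j]$. For example, $\nabla_{e_3}e_2-\nabla_{e_2}e_3=e_1+e_1=2e_1$ and $\nabla_{e_1}e_4-\nabla_{e_4}e_1=-e_1-0$. The second check also shows that the column $\nabla_{e_4}e_j$ must vanish identically, consistent with the statement listing no $\nabla_{e_4}$ terms. It is natural to derive this vanishing explicitly, since $e_4$ appears in the brackets only as a derivation acting diagonally, $\mathrm{ad}_{e_4}=\mathrm{diag}(1,-1,2)$ on $e_1,e_2,e_3$. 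This operator is self-adjoint, so the Koszul terms $-g([e_j,e_k],e_4)$ are zero and the other two cancel.

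The main obstacle is purely bookkeeping: getting the signs in the bracket computations right, especially $[e_4,e_2]$ and $[e_3,e_2]$, since $e_2$ has $s$- and $t$-dependent coefficients. This is where the torsion check in Step 3 is essential.
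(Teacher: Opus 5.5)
Your proposal is correct. The four nonzero brackets $[e_3,e_2]=2e_1$, $[e_4,e_1]=e_1$, $[e_4,e_2]=-e_2$ and $[e_4,e_3]=2e_3$ are right. Feeding them into the Koszul formula, together with your self-adjointness argument for $\mathrm{ad}_{e_4}$ giving $\nabla_{e_4}e_j=0$, reproduces the stated table exactly. The paper states this lemma without a written proof, and your Koszul-formula computation is the standard calculation it implicitly relies on.
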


\begin{lemma}\label{2}
The Ricci curvature of $(F^4, g)$ is given by
$$
(\mathrm{Ric}_{ij})=
\left(
  \begin{array}{llll}
    0& 0 & 0 & 0 \\
    0 & 0 & 0 & 0 \\
    0 & 0 & -6 & 0 \\
    0 & 0 & 0 & -6 \\
  \end{array}
\right),
$$
where $\mathrm{Ric}_{ij}=\displaystyle\sum_{a=1}^{4}g(R(e_{i},e_{a})e_{a},e_{j})$  for all $i,j=\overline{1,4}$.
\end{lemma}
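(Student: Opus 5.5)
The plan is a direct frame computation using only Lemma~\ref{1}. First I will read off the structure constants of the frame from the torsion-free identity $[e_i,e_j]=\nabla_{e_i}e_j-\nabla_{e_j}e_i$. Since no $\nabla_{e_4}e_j$ appears in Lemma~\ref{1}, all of them vanish. This gives
\[
[e_1,e_2]=0,\quad [e_1,e_3]=0,\quad [e_1,e_4]=-e_1,\quad [e_2,e_3]=-2e_1,\quad [e_2,e_4]=e_2,\quad [e_3,e_4]=-2e_3 .
\]
I will cross-check these against the coordinate expressions \eqref{ei}. For instance, $[e_2,e_3]=-2t\cdot t^{-1/2}\partial_x=-2e_1$ and $[e_3,e_4]=-4t\,\partial_s=-2e_3$. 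This check also confirms that Lemma~\ref{1} is consistent with the chosen frame.

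Next I will compute the sixteen vectors $R(e_i,e_a)e_a$ from
\[
R(e_i,e_a)e_a=\nabla_{e_i}\nabla_{e_a}e_a-\nabla_{e_a}\nabla_{e_i}e_a-\nabla_{[e_i,e_a]}e_a .
\]
Each term is obtained by at most two look-ups in the table of Lemma~\ref{1}, using linearity over constants since all coefficients are constant. Two simplifications help. First, $\nabla_{e_4}$ kills every frame vector. Second, $\nabla_{e_a}e_a$ is always a multiple of $e_4$: it equals $e_4,-e_4,2e_4,0$ for $a=1,2,3,4$. So the first term reduces to $\nabla_{e_i}e_4$ times a constant.

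I will then sum over $a$ for each fixed $i$ to get the vector $\sum_a R(e_i,e_a)e_a$. Pairing with $e_j$ gives the row $(\mathrm{Ric}_{ij})_j$. The off-diagonal entries should vanish by a parity argument. The connection coefficients respect the splitting $\{e_1,e_2\}\oplus\{e_3,e_4\}$ in a controlled way, and most mixed terms cancel in pairs, e.g.\ the $e_3$ contributions from $\nabla_{e_1}e_2$ and $\nabla_{e_2}e_1$. The remaining mixed ones I will check individually.

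For the diagonal, I expect the $e_3e_4$ block to behave like a hyperbolic plane. From $\nabla_{e_3}e_3=2e_4$ and $[e_3,e_4]=-2e_3$, the pair $(e_3,e_4)$ contributes $\mp 4$ to each of $\mathrm{Ric}_{33}$ and $\mathrm{Ric}_{44}$. The terms coupling to $e_1,e_2$ should add a further $\mp 2$, giving $6$ in absolute value. In the $e_1,e_2$ rows, the positive contributions, such as the term from the pair $(e_1,e_2)$, should cancel exactly against those from $e_3,e_4$.

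The main obstacle is bookkeeping and the sign convention. With $R(X,Y)=[\nabla_X,\nabla_Y]-\nabla_{[X,Y]}$ and $\mathrm{Ric}(X,Y)=\sum_a g(R(X,e_a)e_a,Y)$, the diagonal entries are minus the usual sums of sectional curvatures. I will therefore track signs carefully and, if needed, verify the $-6$ with the standard formula for $\mathrm{Ric}$ of a left-invariant metric in terms of structure constants as an independent check. I expect the cancellation in $\mathrm{Ric}_{11}$ and $\mathrm{Ric}_{22}$ to be the most delicate part, since there several nonzero terms from $a=2,3,4$ must sum to zero.
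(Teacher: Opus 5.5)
Your plan is correct and is the same direct computation from Lemma~\ref{1} that the paper leaves implicit; the paper states Lemma~\ref{2} without proof. Carrying it out gives $\sum_a R(e_i,e_a)e_a=0,\ 0,\ -6e_3,\ -6e_4$ for $i=1,2,3,4$, which yields the whole matrix, including the vanishing off-diagonal entries.

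One correction to your sign remark. With the paper's conventions, $g(R(X,e_a)e_a,X)$ is exactly the ordinary sectional curvature $K(X,e_a)$, not its negative. Here $K(e_1,e_2)=2$, $K(e_3,e_4)=-4$, and $K(e_i,e_j)=-1$ for $i\in\{1,2\}$, $j\in\{3,4\}$. So the standard structure-constant formula also gives $\mathrm{diag}(0,0,-6,-6)$. Your cross-check would only seem to disagree if you inserted that extra sign.
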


\begin{proof}
Let $\xi=\alpha_{1}e_{1}+\alpha_{2}e_{2}+\alpha_{3}e_{3}+\alpha_{4}e_{4}$ be a vector field on $(F^{4},g)$, where $\alpha_i \in C^{\infty}(F^{4})$ for all $i=\overline{1,4}$.
The vector field $\xi$ defines a Ricci soliton if and only if it satisfies
\begin{equation}\label{4}
\mathrm{Ric}_{ij}
+\frac{1}{2}\bigl[g(\nabla_{e_i}\xi,e_j)+g(\nabla_{e_j}\xi,e_i)\bigr]
=\lambda\,\delta_{ij},
\quad \forall\, i,j=\overline{1,4},
\end{equation}
for some constant $\lambda$. Set $\beta_{ij}=g(\nabla_{e_i}\xi,e_j)$.
By using Lemma~\ref{1}, we obtain
\begin{equation}\label{3}
(\beta_{ij})=
\left(
\begin{array}{llll}
\alpha_{1;1}-\alpha_{4} & \alpha_{2;1}-\alpha_{3} & \alpha_{3;1}+\alpha_{2} & \alpha_{4;1}+\alpha_{1} \\
\alpha_{1;2}-\alpha_{3} & \alpha_{2;2}+\alpha_{4} & \alpha_{3;2}+\alpha_{1} & \alpha_{4;2}-\alpha_{2} \\
\alpha_{1;3}+\alpha_{2} & \alpha_{2;3}-\alpha_{1} & \alpha_{3;3}-2\alpha_{4} & \alpha_{4;3}+2\alpha_{3} \\
\alpha_{1;4} & \alpha_{2;4} & \alpha_{3;4} & \alpha_{4;4}
\end{array}
\right),
\end{equation}
where $\alpha_{j;i}=e_i(\alpha_j)$ for all $i,j=\overline{1,4}$.
By combining Lemma~\ref{2} with \eqref{3}, the system \eqref{4} is equivalent to the vanishing of the symmetric matrix $(E_{ij})$, where
\begin{equation}\label{7}
(E_{ij})=
\left(
\begin{array}{llll}
\alpha_{1;1}-\alpha_{4}-\lambda
& \alpha_{2;1}-2\alpha_{3}+\alpha_{1;2}
& \alpha_{3;1}+2\alpha_{2}+\alpha_{1;3}
& \alpha_{4;1}+\alpha_{1}+\alpha_{1;4} \\
\alpha_{2;1}-2\alpha_{3}+\alpha_{1;2}
& \alpha_{2;2}+\alpha_{4}-\lambda
& \alpha_{3;2}+\alpha_{2;3}
& \alpha_{4;2}-\alpha_{2}+\alpha_{2;4} \\
\alpha_{3;1}+2\alpha_{2}+\alpha_{1;3}
& \alpha_{3;2}+\alpha_{2;3}
& \alpha_{3;3}-2\alpha_{4}-\lambda-6
& \alpha_{4;3}+2\alpha_{3}+\alpha_{3;4} \\
\alpha_{4;1}+\alpha_{1}+\alpha_{1;4}
& \alpha_{4;2}-\alpha_{2}+\alpha_{2;4}
& \alpha_{4;3}+2\alpha_{3}+\alpha_{3;4}
& \alpha_{4;4}-\lambda-6
\end{array}
\right).
\end{equation}
By using \eqref{ei} and \eqref{7}, the Ricci soliton equation reduces to the following system of partial differential equations
\[
\begin{array}{l}
E_{11}= \sqrt{t}\,\alpha_{1;x}-\alpha_{4}-\lambda=0,\\[2pt]
E_{12}= \sqrt{t}\,\alpha_{2;x}-2\alpha_{3}
+\dfrac{s}{\sqrt{t}}\,\alpha_{1;x}
+\dfrac{1}{\sqrt{t}}\,\alpha_{1;y}=0,\\[2pt]
E_{13}= \sqrt{t}\,\alpha_{3;x}+2\alpha_{2}+2t\,\alpha_{1;s}=0,\\[2pt]
E_{14}= \sqrt{t}\,\alpha_{4;x}+\alpha_{1}+2t\,\alpha_{1;t}=0,\\[2pt]
E_{22}= \dfrac{s}{\sqrt{t}}\,\alpha_{2;x}
+\dfrac{1}{\sqrt{t}}\,\alpha_{2;y}
+\alpha_{4}-\lambda=0,\\[2pt]
E_{23}= \dfrac{s}{\sqrt{t}}\,\alpha_{3;x}
+\dfrac{1}{\sqrt{t}}\,\alpha_{3;y}
+2t\,\alpha_{2;s}=0,\\[2pt]
E_{24}= \dfrac{s}{\sqrt{t}}\,\alpha_{4;x}
+\dfrac{1}{\sqrt{t}}\,\alpha_{4;y}
-\alpha_{2}+2t\,\alpha_{2;t}=0,\\[2pt]
E_{33}= 2t\,\alpha_{3;s}-2\alpha_{4}-\lambda-6=0,\\[2pt]
E_{34}= 2t\,\alpha_{4;s}+2\alpha_{3}+2t\,\alpha_{3;t}=0,\\[2pt]
E_{44}= 2t\,\alpha_{4;t}-\lambda-6=0.
\end{array}
\]
With $\alpha_{j;x_{i}} = \frac{\partial \alpha_{j}}{\partial x_{i}}$ for all $i,j = \overline{1,4}$, where $x_1 = x$, $x_2 = y$, $x_3 = s$, and $x_4 = t$.
The general solution of the above system of partial differential equations is given by
\begin{align}
\alpha_1 &= \frac{1}{2\sqrt{t}}\Big[(c_2-12)x+2c_3y+2c_5
+s\big(c_1x+(c_2+12)y-2c_4\big)\Big], \nonumber\\
\alpha_2 &= -\frac{\sqrt{t}}{2}\Big[c_1x+(c_2+12)y-2c_4\Big], \label{alphai}\\
\alpha_3 &= \frac{1}{4t}\Big[c_1(s^2-t^2)+2c_2s+2c_3\Big], \nonumber\\
\alpha_4 &= \frac{1}{2}\big(c_1s+c_4\big), \nonumber
\end{align}
where $\lambda=-6$ and $c_1,\ldots,c_5 \in \mathbb{R}$ are arbitrary constants.
Consequently, Theorem~\ref{Ricci th} follows directly from \eqref{ei} and \eqref{alphai}.
\end{proof}


\begin{proposition}
The Ricci soliton vector field $\xi$ defined in Theorem~\ref{Ricci th} is non-gradient.
\end{proposition}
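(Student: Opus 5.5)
The plan is to use the standard criterion that a vector field $\xi$ is locally a gradient exactly when its metric dual $1$-form $\xi^\flat$ is closed. Since $\nabla$ is torsion-free, $d\xi^\flat(X,Y)=g(\nabla_X\xi,Y)-g(\nabla_Y\xi,X)$, so this is equivalent to the bilinear form $\beta(X,Y)=g(\nabla_X\xi,Y)$ being symmetric. In the frame $\{e_i\}$ this is the matrix $(\beta_{ij})$ already computed in \eqref{3}. So it suffices to show that, for the functions $\alpha_i$ in \eqref{alphai}, the matrix $(\beta_{ij})$ is never symmetric, whatever the constants $c_1,\dots,c_5$. Since a gradient field satisfies $d\xi^\flat=0$ everywhere, producing a single pair $(i,j)$ with $\beta_{ij}\neq\beta_{ji}$ at some point is enough.

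\textbf{Step 1: the $(1,2)$ entry.} From \eqref{3}, $\beta_{12}-\beta_{21}=\alpha_{2;1}-\alpha_{1;2}$. The soliton equation $E_{12}=0$ gives $\alpha_{1;2}=2\alpha_3-\alpha_{2;1}$, so $\beta_{12}-\beta_{21}=2\alpha_{2;1}-2\alpha_3$. Using \eqref{ei} and \eqref{alphai}, $\alpha_{2;1}=\sqrt t\,\partial_x\alpha_2=-\tfrac{c_1t}{2}$. Hence
\[
\beta_{12}-\beta_{21}=-c_1t-\frac{1}{2t}\bigl[c_1(s^2-t^2)+2c_2s+2c_3\bigr].
\]
Viewed as a function of $(s,t)$, this vanishes identically only if $c_1=c_2=c_3=0$. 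Otherwise $\xi$ is already non-gradient.

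\textbf{Step 2: the $(1,4)$ entry in the remaining case.} Assume $c_1=c_2=c_3=0$. Then $\alpha_4=c_4/2$ is constant and
\[
\alpha_1=\frac{1}{2\sqrt t}\bigl[-12x+2c_5+s(12y-2c_4)\bigr].
\]
From \eqref{3}, $\beta_{14}-\beta_{41}=\alpha_{4;1}+\alpha_1-\alpha_{1;4}$. Here $\alpha_{4;1}=0$, and $\alpha_{1;4}=2t\,\partial_t\alpha_1=-\alpha_1$ because $\alpha_1$ is homogeneous of degree $-\tfrac12$ in $t$. So $\beta_{14}-\beta_{41}=2\alpha_1$. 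This is not identically zero because of the $-12x$ term, whose coefficient is independent of all the constants. Therefore $d\xi^\flat\neq0$ in every case, and $\xi$ is never a gradient field, not even locally.

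\textbf{Main obstacle.} The main care needed is the bookkeeping of the antisymmetric parts and the frame derivatives $\alpha_{j;i}$. It helps to use the already solved equations $E_{12}=0$ and $E_{14}=0$ to shortcut these computations. The essential point is that the nonzero constant $12$, which comes from $\lambda=-6$, rules out the only possible escape (all $c_i$ chosen so the antisymmetric part vanishes). If the $(1,4)$ computation turned out to be messier than expected, I would instead directly evaluate $\partial_x(\xi^\flat)_y-\partial_y(\xi^\flat)_x$ in coordinates, using $(g_{ij})$.
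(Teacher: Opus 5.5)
Your proof is correct, and it is more complete than the paper's. Both arguments use the same obstruction, closedness of $\xi^\flat$, but the paper works in coordinates. It solves for $f_s$ and $f_t$ from the $\partial_s$- and $\partial_t$-components of $\xi=\mathrm{grad}\,f$ and asserts $\partial_t f_s\neq\partial_s f_t$. In fact $\partial_t f_s=-\frac{c_1s^2+2c_2s+2c_3}{4t^3}$ and $\partial_s f_t=\frac{c_1}{4t}$, so that test only sees the $(e_3,e_4)$-component of $d\xi^\flat$. In your notation,
$\beta_{34}-\beta_{43}=c_1t+\frac{c_1s^2+2c_2s+2c_3}{t}=-2(\beta_{12}-\beta_{21})$.
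So the paper's test carries exactly the information of your Step 1 and is inconclusive precisely when $c_1=c_2=c_3=0$. The paper never treats that case, which contains the basic soliton $-6(x\partial_x+y\partial_y)$; your Step 2 disposes of it via the $(e_1,e_4)$-component.

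Working in the orthonormal frame with the already computed matrix $(\beta_{ij})$ makes this second component cheap to check, with no metric inversion. Step 1 is in fact dispensable. Since $\alpha_4$ never depends on $x$ and $\alpha_1$ is always $t^{-1/2}$ times a function of $(x,y,s)$, one has $\beta_{14}-\beta_{41}=2\alpha_1$ for every choice of constants. Moreover $\alpha_1\not\equiv 0$, because the $x$-coefficient $c_2-12$ and the $sy$-coefficient $c_2+12$ of $2\sqrt{t}\,\alpha_1$ cannot vanish simultaneously.

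One side remark: the family displayed in the theorem satisfies $E_{11}=E_{22}=E_{33}=0$ only when $c_4=c_2$, so its $\partial_t$-coefficient should be $(c_1s+c_2)t$. Your argument is unaffected, since it uses only $E_{12}=0$, which does hold identically, and $\alpha_{4;1}=0$.
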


\begin{proof}
Assume, by contradiction, that $\xi$ is a gradient Ricci soliton, there exists a smooth function
$f\in C^\infty(F^4)$ such that $\xi=\mathrm{grad} f$. With respect to the coordinate frame the gradient of
$f$ is defined by
\[
\mathrm{grad} f =\sum_{i,j=1}^4 g^{ij}\,f_i\,\partial_j,
\]
where $f_i=\frac{\partial f}{\partial x_{i}}$ for all $i= \overline{1,4}$, with $x_1 = x$, $x_2 = y$, $x_3 = s$, $x_4 = t$, and
$(g^{ij})$ is the inverse of the metric matrix $(g_{ij})$ given by
\[
(g^{ij})=
\begin{pmatrix}
t+\dfrac{s^{2}}{t} & \dfrac{s}{t} & 0 & 0\\[6pt]
\dfrac{s}{t} & \dfrac{1}{t} & 0 & 0\\[6pt]
0 & 0 & 4t^{2} & 0\\[6pt]
0 & 0 & 0 & 4t^{2}
\end{pmatrix}.
\]
Hence, the gradient of $f$ is given by
\[
\mathrm{grad} f=
\left[\left(t+\frac{s^{2}}{t}\right)f_x+\frac{s}{t}f_y\right]\partial_x
+\left[\frac{s}{t}f_x+\frac{1}{t}f_y\right]\partial_y
+4t^{2}f_s\,\partial_s
+4t^{2}f_t\,\partial_t .
\]
Comparing this expression with the vector field $\xi$ in Theorem~\ref{Ricci th}, we obtain
\begin{align}
\left(t+\frac{s^{2}}{t}\right)f_x+\frac{s}{t}f_y
&= \frac{1}{2}\big[(c_2-12)x+2c_3y+2c_5\big], \label{A}\\
\frac{s}{t}f_x+\frac{1}{t}f_y
&= -\frac{1}{2}\big[c_1x+(c_2+12)y-2c_4\big], \label{B}\\
4t^{2}f_s
&= \frac{1}{2}\big[c_1(s^{2}-t^{2})+2c_2s+2c_3\big], \label{C}\\
4t^{2}f_t
&= (c_1s+c_4)t. \label{D}
\end{align}
From \eqref{C} and \eqref{D} we deduce
\[
f_s=\frac{1}{8t^{2}}\big[c_1(s^{2}-t^{2})+2c_2s+2c_3\big],
\qquad
f_t=\frac{1}{4t}(c_1s+c_4).
\]
A direct computation shows that
$\frac{\partial f_s}{\partial t}\neq \frac{\partial f_t}{\partial s}$.
Therefore, no smooth function $f$ exists such that $\xi=\mathrm{grad} f$.
\end{proof}


\section{Application to Harmonic Maps}

Cherif \cite{cherif2020} established several nonexistence results for harmonic and bi-harmonic maps whose target manifolds satisfy suitable Ricci curvature bounds. These results extend classical vanishing theorems and highlight the influence of negative curvature-type conditions on the geometry of harmonic maps. One of the key observations is that if the Ricci curvature of the target manifold satisfies a coercivity condition of the form $\mathrm{Ric} - \lambda g \geq 0$ for some constant $\lambda$, then any harmonic map from a compact orientable manifold without boundary must be constant. Motivated by these results, we investigate harmonic maps into the four-dimensional Riemannian manifold $(F^4,g)$. Using the explicit structure of the Ricci curvature of $(F^4,g)$, we obtain the following result.

\begin{theorem}\label{th2}
Any harmonic map from a compact orientable Riemannian manifold with no boundary to the Riemannian manifold $(F^4,g)$ must be constant.
\end{theorem}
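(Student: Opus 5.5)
The plan is to exploit the Ricci soliton structure of Theorem~\ref{Ricci th} through a divergence argument, in the spirit of \cite{cherif2020}. Fix a soliton field $\xi$ from Theorem~\ref{Ricci th} (say $c_1=\dots=c_5=0$), so that $\mathrm{Ric}+\frac12\mathcal{L}_\xi g=-6g$ on $F^4$. Let $\varphi\colon(M,h)\to(F^4,g)$ be harmonic with $M$ compact, orientable and without boundary. Let $\{e_i\}$ now denote a local orthonormal frame on $M$. Define the vector field on $M$
\[
Z=\sum_i g\bigl(\xi\circ\varphi,\,d\varphi(e_i)\bigr)e_i ,
\]
which is globally defined and independent of the frame.

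\textbf{Step 1: compute $\operatorname{div} Z$.} A standard computation gives
\[
\operatorname{div} Z=\sum_i g\bigl(\nabla^{\varphi}_{e_i}(\xi\circ\varphi),d\varphi(e_i)\bigr)+g(\xi,\tau(\varphi)).
\]
The second term vanishes by harmonicity. The first equals $\frac12\sum_i(\mathcal{L}_\xi g)(d\varphi(e_i),d\varphi(e_i))$. By the soliton equation and Lemma~\ref{2}, $\frac12\mathcal{L}_\xi g=-6g-\mathrm{Ric}$, and in the frame $\{e_1,\dots,e_4\}$ of $F^4$ this is $\mathrm{diag}(-6,-6,0,0)$. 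Writing $\theta_a$ for the coframe on $F^4$, we get
\[
\operatorname{div} Z=-6\sum_i\Bigl(\theta_1(d\varphi(e_i))^2+\theta_2(d\varphi(e_i))^2\Bigr)\le 0 .
\]

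\textbf{Step 2: integrate.} By the divergence theorem on the closed manifold $M$, $\int_M\operatorname{div}Z\,v^h=0$, so the nonpositive integrand vanishes identically. Hence $d\varphi$ takes values in $\mathrm{span}\{e_3,e_4\}=\mathrm{span}\{\partial_s,\partial_t\}$. Since $M$ is connected (or componentwise), $x$ and $y$ are constant along $\varphi$. Thus $\varphi$ maps into a slice $\{(x_0,y_0)\}\times\mathbb{H}^2$.

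\textbf{Step 3: reduce to the hyperbolic slice (expected main obstacle).} The soliton identity is degenerate in the $e_3,e_4$ directions, so it gives no information there. I would first check with Lemma~\ref{1} that the slice is totally geodesic. Indeed $\nabla_{e_3}e_3=2e_4$, $\nabla_{e_3}e_4=-2e_3$, and $\nabla_{e_4}e_3=\nabla_{e_4}e_4=0$ all lie in $\mathrm{span}\{e_3,e_4\}$. Consequently $\varphi$ is a harmonic map into the slice, which carries the metric $(ds^2+dt^2)/4t^2$, a hyperbolic plane of curvature $-4$.

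A harmonic map from a compact manifold into a simply connected complete manifold of nonpositive curvature is constant. To see this, take $\rho=\frac12 d(p,\cdot)^2$, whose Hessian satisfies $\mathrm{Hess}\,\rho\ge g$ on a Hadamard manifold. Then
\[
\Delta(\rho\circ\varphi)=\mathrm{Hess}\,\rho\,(d\varphi,d\varphi)\ge|d\varphi|^2 .
\]
Integrating over $M$ forces $d\varphi=0$. Alternatively, the same convexity argument applies directly to all of $F^4=\mathbb{R}^2\times\mathbb{H}^2$, which is itself Hadamard. I would mention this as a cross-check, but keep the soliton route as the main line since it is the point of the section.
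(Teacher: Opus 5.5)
Your main line is correct, and it is more complete than the paper's proof. The paper argues in one line. By Lemma~\ref{2}, with $\lambda=-6$ one has $\mathrm{Ric}(X,X)-\lambda g(X,X)=6(X_1^2+X_2^2)\ge 0$, and the paper then invokes Proposition~9 of Cherif's paper. Your Steps~1--2 are exactly the divergence identity behind such a result, and you correctly see what it actually delivers. Since $\tfrac12\mathcal{L}_\xi g=-6g-\mathrm{Ric}=-6(\theta_1^2+\theta_2^2)$ is only semi-definite, integration gives $\theta_1\circ d\varphi=\theta_2\circ d\varphi=0$. That places $\varphi$ in a slice $\{(x_0,y_0)\}\times\mathbb{H}^2$; it does not make $\varphi$ constant.

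Choosing another soliton does not help. Every soliton in Theorem~\ref{Ricci th} has $\lambda=-6$, so all of them produce the same degenerate form. A semi-definite inequality $\mathrm{Ric}-\lambda g\ge 0$ cannot force constancy by itself: a flat torus with $\xi=0$, $\lambda=0$ and the identity map is a counterexample. The paper treats that inequality as sufficient, so your Step~3 is the ingredient that actually finishes the proof. It has three parts, all correct:
\begin{itemize}
\item The slice is totally geodesic. Your check via Lemma~\ref{1} is right, and the slice is also the fixed set of the isometry $(x,y,s,t)\mapsto(2x_0-x,2y_0-y,s,t)$.
\item The slice is a hyperbolic plane of curvature $-4$.
\item A harmonic map from a closed manifold into a Hadamard manifold is constant, by the $\mathrm{Hess}\,\tfrac12 d(p,\cdot)^2\ge g$ argument.
\end{itemize}
Total geodesy is not even needed to make $\psi$ harmonic, since $\tau(\psi)$ is the tangential part of $\tau(\varphi)$. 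The paper's route gains brevity by outsourcing to a citation. Yours is self-contained and handles the degenerate directions explicitly.

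You should, however, delete the ``cross-check'': $(F^4,g)$ is \emph{not} a Hadamard manifold. The underlying manifold is $\mathbb{R}^2\times\mathbb{H}^2$, but $g$ is not the product metric. From Lemma~\ref{1}, $R(e_1,e_2)e_2=\nabla_{e_1}(-e_4)-\nabla_{e_2}e_3=2e_1$, so $K(e_1,e_2)=2>0$. This agrees with the paper's value $g(R(e_1,e_2)e_1,e_2)=-2$ and with $\mathrm{Ric}(e_1,e_1)=2-1-1=0$. Geometrically, the $\mathbb{R}^2$-fibres are intrinsically flat, but their second fundamental form satisfies $\mathrm{II}(e_1,e_1)=e_4=-\mathrm{II}(e_2,e_2)$ and $\mathrm{II}(e_1,e_2)=e_3$. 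The Gauss equation then gives ambient curvature $0+1+1=2$ on the fibre planes. So $\tfrac12 d(p,\cdot)^2$ is not globally convex on $F^4$ in the required sense. This positive curvature in the fibre directions is exactly why the reduction to the hyperbolic slice is necessary rather than optional.
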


\begin{proof}
By applying Lemma \ref{2} with $\lambda = -6$, we obtain the inequality
\begin{equation}\label{th}
\mathrm{Ric}(X,X)-\lambda g(X,X)=6 (X_1^2+X_2^2)\geq 0,
\end{equation}
for all $X \in \Gamma(TF^4).$
The conclusion follows directly from inequality \eqref{th} and Proposition~9 in \cite{cherif2020}, which guarantees the nonexistence of nonconstant harmonic maps under this curvature condition.
\end{proof}

Before presenting the proposition, we note an important property of Ricci soliton vector fields on $(F^4,g)$.

\begin{proposition}
The components of the Ricci soliton vector field $\xi$ on $(F^4,g)$ are harmonic functions.
\end{proposition}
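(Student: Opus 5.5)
The statement is about the coordinate components of $\xi$, i.e.\ the four functions
\[
\xi^x=\tfrac12\big[(c_2-12)x+2c_3y+2c_5\big],\quad \xi^y=-\tfrac12\big[c_1x+(c_2+12)y-2c_4\big],
\]
\[
\xi^s=\tfrac12\big[c_1(s^2-t^2)+2c_2s+2c_3\big],\quad \xi^t=(c_1s+c_4)t,
\]
read off from Theorem~\ref{Ricci th}. I would show that each is annihilated by the Laplace--Beltrami operator of $g$,
\[
\Delta f=\frac{1}{\sqrt{\det g}}\,\partial_i\bigl(\sqrt{\det g}\,g^{ij}\partial_j f\bigr).
\]
The plan is first to write $\Delta$ explicitly in the coordinates $(x,y,s,t)$, and then to substitute the four functions.

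For the first step I would use the matrix $(g_{ij})$ and its inverse $(g^{ij})$, already computed in the proof that $\xi$ is non-gradient. The upper $2\times 2$ block has determinant
\[
\frac1t\cdot\frac{s^2+t^2}{t}-\frac{s^2}{t^2}=1,
\]
so $\det g=\frac{1}{16t^4}$ and $\sqrt{\det g}=\frac{1}{4t^2}$. This factor depends only on $t$, and the $s$- and $t$-entries of $g^{ij}$ are both $4t^2$. In the $t$-direction the weight therefore reduces to $\partial_t(\partial_t f)$, since $\sqrt{\det g}\,g^{tt}=1$. Also, nothing in the $x,y$ block depends on $x$ or $y$. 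Hence I expect
\[
\Delta f=\Bigl(t+\frac{s^2}{t}\Bigr)f_{xx}+\frac{2s}{t}f_{xy}+\frac1t f_{yy}+4t^2\bigl(f_{ss}+f_{tt}\bigr),
\]
with no first-order terms. Recovering the Poincar\'e half-plane Laplacian $4t^2(\partial_s^2+\partial_t^2)$ on the $\mathbb H^2$ factor serves as a sanity check.

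For the second step:
\begin{itemize}
\item $\xi^x$ and $\xi^y$ are affine in $(x,y)$ and independent of $(s,t)$, so every second derivative vanishes.
\item $\xi^s$ has no $x,y$-dependence, and $\xi^s_{ss}+\xi^s_{tt}=c_1-c_1=0$.
\item $\xi^t=c_1st+c_4t$ has $\xi^t_{ss}=\xi^t_{tt}=0$.
\end{itemize}
Thus $\Delta\xi^x=\Delta\xi^y=\Delta\xi^s=\Delta\xi^t=0$.

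The only real obstacle is the first step: making sure no first-order terms survive in $\Delta$. A stray term of the form $\partial_t(\log\sqrt{\det g})$ would spoil harmonicity of $\xi^t$. I would double-check this by recomputing $\Delta f=\sum_i\bigl(e_i e_i f-(\nabla_{e_i}e_i)f\bigr)$ using Lemma~\ref{1}. There $\sum_i\nabla_{e_i}e_i=e_4-e_4+2e_4=2e_4=4t\partial_t$. Combined with $e_4e_4=2t\partial_t(2t\partial_t)=4t^2\partial_t^2+4t\partial_t$, the $4t\partial_t$ terms cancel, confirming the coordinate formula above.
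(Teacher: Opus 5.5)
Your proof is correct and follows the paper's route: the paper likewise evaluates $\Delta f=\sum_{i}\bigl[e_i(e_i f)-(\nabla_{e_i}e_i)f\bigr]$ using \eqref{ei} and Lemma~\ref{1}, which is exactly your cross-check and reduces to your coordinate formula, and then substitutes the coefficients of $\xi$. Your explicit choice of the coordinate coefficients $\xi^x,\xi^y,\xi^s,\xi^t$ is the reading under which the statement is true, since the frame components $\alpha_j$ are not harmonic in general (e.g.\ $\alpha_3=c_3/(2t)$ gives $\Delta\alpha_3=4c_3/t$); your check also survives an apparent misprint in Theorem~\ref{Ricci th}, whose $\partial_t$-coefficient should be $(c_1s+c_2)t$ for $E_{11}=E_{22}=E_{33}=0$ to hold, because that function is still annihilated by $\partial_s^2+\partial_t^2$.
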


\begin{proof}
This result follows directly from the definition of the Laplacian. For each component $\xi_j$, $j=1,\dots,4$, we have
\[
\Delta (\xi_j) = \sum_{i=1}^4 \Big[e_i(e_i(\xi_j)) - (\nabla_{e_i} e_i)(\xi_j)\Big],
\]
and by using equation \eqref{ei} together with Lemma \ref{1}, we conclude that $\Delta (\xi_j) = 0$. Hence, each component $\xi_j$ is harmonic.
\end{proof}

The fact that the components of the Ricci soliton vector field $\xi$ on $(F^4,g)$ are harmonic functions is particularly important in the study of harmonic maps. Since each component $\xi_j$ satisfies $\Delta \xi_j = 0$, the Ricci soliton vector field naturally defines a harmonic map from $F^4$ into $\mathbb{R}^4$ when viewed in local coordinates. This connection provides a bridge between Ricci soliton theory and the theory of energy-minimizing maps, allowing the application of classical results on harmonic functions.


\section{Harmonic Vector Fields on $(F^4,g)$}

As the Riemannian metric $g$ varies with the coordinates $s$ and $t$, we consider harmonic vector fields whose components depend explicitly on $s$ and $t$. We now turn to the characterization of harmonic vector fields on the Lie group $F^4$ with respect to the left-invariant metric $g$. The following theorem provides an explicit system of equations that completely describes when a vector field on $F^4$ is harmonic section.

\begin{theorem}\label{th3}
  A vector field $X = X_1(s,t)e_1 +...+ X_4(s,t)e_4$ on $F^4$ is a harmonic section with respect to $g$ if and only if
  \begin{eqnarray*}
    4t^2X_{1;tt}+4t^2X_{1;ss}+4tX_{2;s}-3X_1&=&0;\\
    4t^2X_{2;tt}+4t^2X_{2;ss}-4tX_{1;s}-3X_2&=&0;\\
    2t^2X_{3;tt}+2t^2X_{3;ss}-4tX_{4;s}-3X_3&=&0;\\
    2t^2X_{4;tt}+2t^2X_{4;ss}+4tX_{3;s}-3X_4&=&0.
  \end{eqnarray*}
\end{theorem}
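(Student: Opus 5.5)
The plan is to compute the rough Laplacian $\overline{\Delta}X=\mathrm{Tr}_g\nabla^2X=\sum_{i=1}^4\bigl(\nabla_{e_i}\nabla_{e_i}X-\nabla_{\nabla_{e_i}e_i}X\bigr)$ directly in the left-invariant orthonormal frame $\{e_i\}$. We use Lemma~\ref{1} for all covariant derivatives of frame vectors and \eqref{ei} for the action of $e_i$ on the coefficient functions. Since $X_j=X_j(s,t)$, we have $e_1(X_j)=e_2(X_j)=0$, $e_3(X_j)=2tX_{j;s}$ and $e_4(X_j)=2tX_{j;t}$. Moreover, by Lemma~\ref{1},
\[
\sum_i\nabla_{e_i}e_i=e_4-e_4+2e_4+0=2e_4 .
\]
Hence the correction term is $\nabla_{2e_4}X=\sum_j 4tX_{j;t}\,e_j$, because Lemma~\ref{1} gives $\nabla_{e_4}e_j=0$ for every $j$.

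First, I would write $\nabla_{e_i}X=\sum_j e_i(X_j)e_j+\sum_j X_j\nabla_{e_i}e_j$ for each $i$, reading the second sum off Lemma~\ref{1}. This gives:
\begin{itemize}
\item $\nabla_{e_1}X=X_1e_4+X_2e_3-X_3e_2-X_4e_1$;
\item $\nabla_{e_2}X=X_1e_3-X_2e_4-X_3e_1+X_4e_2$;
\item $\nabla_{e_3}X=\sum_j 2tX_{j;s}e_j+X_2e_1-X_1e_2-2X_4e_3+2X_3e_4$;
\item $\nabla_{e_4}X=\sum_j2tX_{j;t}e_j$.
\end{itemize}

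Second, I would apply $\nabla_{e_i}$ once more to each of these and sum. The $e_4$ term gives $e_4e_4(X_j)=4t^2X_{j;tt}+4tX_{j;t}$, and the $4tX_{j;t}$ part cancels exactly against the correction term $-\nabla_{2e_4}X$. The $e_3$ term gives $4t^2X_{j;ss}$ plus first-order cross terms in $s$-derivatives, coming from the $\nabla_{e_3}e_j$ rotations applied twice. The $e_1,e_2$ terms contribute only zeroth-order terms, since their coefficients are constant along $e_1,e_2$; these also involve the double application of the rotation-like maps $\nabla_{e_1}$, $\nabla_{e_2}$, $\nabla_{e_3}$.

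Third, I would collect the coefficient of each $e_j$. I expect the $e_1,e_2$ components to come out as $4t^2X_{1;tt}+4t^2X_{1;ss}+4tX_{2;s}-3X_1$ and its companion with $-4tX_{1;s}$. For the $e_3,e_4$ components, the factor $2$ in $\nabla_{e_3}e_3=2e_4$ and $\nabla_{e_3}e_4=-2e_3$ produces larger cross and zeroth-order coefficients. After the common factor is normalized, this should yield the stated equations with leading coefficient $2t^2$. Setting each component to zero gives exactly the system in the theorem, and the converse is immediate since the computation is an identity.

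The main obstacle is purely bookkeeping. One must carefully track the zeroth-order contributions, such as the $-3X_1$ term from summing the double rotations $\nabla_{e_i}\nabla_{e_i}$ over $i=1,2,3$. One must also track the mixed first-order terms from $e_3$, including the sign conventions in Lemma~\ref{1}. I would organize this by writing each $\nabla_{e_i}$ (for $i=1,2,3$) as a $4\times4$ matrix acting on the coefficient vector $(X_1,\dots,X_4)$. Then the zeroth-order part of $\overline{\Delta}X$ is $\sum_i A_i^2$, and the first-order part is $2A_3\,e_3(\cdot)$. This makes the final collection of coefficients and the rescaling of the last two equations transparent.
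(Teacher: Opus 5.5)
Your proposal is correct and is essentially the paper's proof: expand $\nabla_{e_i}X$ via Lemma~\ref{1}, apply $\nabla_{e_i}$ again, subtract $\nabla_{2e_4}X$, and collect the $e_j$-components. Your matrix bookkeeping also fixes the constants you left as expectations: $A_1^2=A_2^2=-I$ and $A_3^2=\mathrm{diag}(-1,-1,-4,-4)$ give the zeroth-order part $\mathrm{diag}(-3,-3,-6,-6)$, and $2A_3e_3(X)$ gives the cross terms $4tX_{2;s},\,-4tX_{1;s},\,-8tX_{4;s},\,8tX_{3;s}$, so the last two stated equations are the raw $e_3,e_4$ components divided by $2$.
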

\begin{proof}
  By setting $\theta_{ij}=g(\nabla_{e_i}X,e_j)$ for all $i, j = \overline{1,4}$. By using (\ref{3}), we get
  \begin{equation}\label{theta}
(\theta_{ij})=\left(
  \begin{array}{llll}
    -X_4 & -X_3 & X_2 & X_1 \\
    -X_3 & X_4 & X_1 & -X_2 \\
    2tX_{1;s}+X_2 & 2tX_{2;s}-X_1 & 2tX_{3;s}-2X_4&2tX_{4;s}+2X_3 \\
     2tX_{1;t}& 2tX_{2;t} & 2tX_{3;t} & 2tX_{4;t} \\
  \end{array}
\right),
\end{equation}
where $X_{j;s}=e_3(X_j)$,$X_{j;t}=e_4(X_j)$,$X_{j;ss}=e_3(e_3(X_j))$ and $X_{j;tt}=e_4(e_4(X_j))$ for all $j=\overline{1,4}.$ Hence,
\begin{eqnarray}
  \nabla_{e_1}X&=& -X_4e_1-X_3e_2+X_2e_3+X_1e_4;\nonumber\\
  \nabla_{e_2}X&=& -X_3e_1+X_4e_2+X_1e_3-X_2e_4;\label{nabla}\\
  \nabla_{e_3}X&=& (2tX_{1;s}+X_2)e_1+(2tX_{2;s}-X_1)e_2+(2tX_{3;s}-2X_4)e_3\nonumber\\
               & &+(2tX_{4;s}+2X_3)e_4;\nonumber\\
  \nabla_{e_4}X&=& 2tX_{1;t}e_1+2tX_{2;t}e_2+2tX_{3;t}e_3+2tX_{4;t}e_4; \nonumber
  \end{eqnarray}
 From Lemma (\ref{1}) and equations (\ref{nabla}), we get
 \begin{eqnarray}
  \nabla_{e_1}\nabla_{e_1}X&=& -X_1e_1-X_2e_2-X_3e_3-X_4e_4;\nonumber\\
  \nabla_{e_2}\nabla_{e_2}X&=& -X_1e_1-X_2e_2-X_3e_3-X_4e_4;\label{nablanabla}\\
  \nabla_{e_3}\nabla_{e_3}X&=& (4t^2X_{1;ss}+4tX_{2;s}-X_1)e_1+(4t^2X_{2;ss}-4tX_{1;s}-X_2)e_2\nonumber\\
                            &&+(4t^2X_{3;ss}-8tX_{4;s}-4X_3)e_3+(4t^2X_{4;ss}+8tX_{3;s}-4X_4)e_4;\nonumber\\
  \nabla_{e_4}\nabla_{e_4}X&=& (4t^2X_{1;tt}+4tX_{1;t})e_1+(4t^2X_{2;tt}+4tX_{2;t})e_2\nonumber\\
                            &&+(4t^2X_{3;tt}+4tX_{3;t})e_3+(4t^2X_{4;tt}+4tX_{4;t})e_4. \nonumber
  \end{eqnarray}
 Theorem \ref{th3} follows from (\ref{nablanabla}), and the following equation
 $$\overline{\Delta}X=\sum_{i=1}^{4}\left[\nabla_{e_i}\nabla_{e_i}X-\nabla_{\nabla_{e_i}ei}X\right]=0.$$
\end{proof}

For the four-dimensional Lie group $(F^4,g)$, the harmonicity condition imposes specific restrictions on the components of vector fields expressed in the natural coordinate frame. The following corollary provides an explicit description of such harmonic vector fields, identifying precisely the forms their components must take.

\begin{corollary}
On the Lie group $(F^4,g)$, the following vector fields are harmonic sections if and only if their components take the specified forms:
\begin{enumerate}
  \item The vector field $\zeta_1 := \widetilde{X}_1(s,t)\,\partial_x$ is harmonic if and only if
  \[
    \widetilde{X}_1(s,t) = c_1 + c_2 t^2;
  \]
  \item The vector field $\zeta_2 := \widetilde{X}_2(s,t)\,\partial_y$ is harmonic if and only if
  \[
    \widetilde{X}_2(s,t) = c_1 + \frac{c_2 t^2}{(s^2 + t^2)^2};
  \]
  \item The vector field $\zeta_3 := \widetilde{X}_3(s,t)\,\partial_s$ is harmonic if and only if
  \[
    \widetilde{X}_3(s,t) = c_1 t^{\frac{3}{2} + \frac{\sqrt{7}}{2}} + c_2 t^{\frac{3}{2} - \frac{\sqrt{7}}{2}};
  \]
  \item The vector field $\zeta_4 := \widetilde{X}_4(s,t)\,\partial_t$ is harmonic if and only if
  \[
    \widetilde{X}_4(s,t) = c_1 t^{\frac{3}{2} + \frac{\sqrt{7}}{2}} + c_2 t^{\frac{3}{2} - \frac{\sqrt{7}}{2}}.
  \]
\end{enumerate}
\end{corollary}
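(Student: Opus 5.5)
The plan is to reduce each of the four cases to the system of Theorem~\ref{th3}. First I express the coordinate fields in the frame $\{e_i\}$ by inverting \eqref{ei}:
\[
\partial_x=\tfrac{1}{\sqrt t}\,e_1,\qquad \partial_y=\sqrt t\,e_2-\tfrac{s}{\sqrt t}\,e_1,\qquad \partial_s=\tfrac{1}{2t}\,e_3,\qquad \partial_t=\tfrac{1}{2t}\,e_4 .
\]
This gives the frame components of each $\zeta_k$:
\begin{itemize}
\item $\zeta_1$: $X_1=\widetilde X_1/\sqrt t$, with the other components zero.
\item $\zeta_2$: $X_1=-s\widetilde X_2/\sqrt t$ and $X_2=\sqrt t\,\widetilde X_2$.
\item $\zeta_3$: $X_3=\widetilde X_3/(2t)$.
\item $\zeta_4$: $X_4=\widetilde X_4/(2t)$.
\end{itemize}
All of these depend only on $(s,t)$, so Theorem~\ref{th3} applies. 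In its equations, $X_{j;s}=e_3(X_j)=2t\,\partial_sX_j$ and $X_{j;tt}=e_4e_4(X_j)=4t^2\partial_t^2X_j+4t\,\partial_tX_j$, and similarly for $X_{j;ss}$. I will substitute these expressions throughout.

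For $\zeta_1$, the second equation of Theorem~\ref{th3} reduces to $-4tX_{1;s}=0$, so $\widetilde X_1=\widetilde X_1(t)$. The first equation then becomes a linear ODE of Euler type in $t$, obtained after substituting $X_1=t^{-1/2}\widetilde X_1$ and collecting powers of $t$. Its indicial equation should have roots $0$ and $2$, which gives $c_1+c_2t^2$.

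The cases $\zeta_3$ and $\zeta_4$ work the same way. For $\zeta_3$, the fourth equation forces $\partial_s\widetilde X_3=0$, and the third equation becomes an Euler equation $a t^2\widetilde X''+b t\widetilde X'+c\widetilde X=0$. I will read off its indicial polynomial and check that the roots are $\tfrac32\pm\tfrac{\sqrt7}{2}$. For $\zeta_4$, the third equation kills the $s$-dependence and the fourth gives the same Euler equation by symmetry. For the "only if" direction the key point is that the cross terms force independence from $s$; the "if" direction is then just the Euler ODE.

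The main obstacle is $\zeta_2$. There both $X_1$ and $X_2$ are nonzero and depend on $s$, so the first two equations of Theorem~\ref{th3} form a coupled PDE system for the single unknown $\widetilde X_2(s,t)$. I would substitute and then take a combination of the two equations: $s$ times the second plus a multiple of the first. This should eliminate the second-order terms and yield a first-order relation for $\widetilde X_2$. That relation should express $\partial_s\widetilde X_2$ in terms of $\partial_t\widetilde X_2$ and $\widetilde X_2$. Its characteristics should suggest the variable $\rho=t/(s^2+t^2)$, or equivalently an ansatz $\widetilde X_2=c_1+F\bigl(t^2/(s^2+t^2)^2\bigr)$. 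Plugging the ansatz into the remaining second-order equation should force $F$ to be linear. Then I would verify directly that $c_1+c_2t^2/(s^2+t^2)^2$ satisfies both equations. If the elimination does not close cleanly, the fallback is to expand $\widetilde X_2$ in the hyperbolic-plane structure: the expression $t/(s^2+t^2)$ is the image of $t$ under the isometry $z\mapsto -1/z$, and $\partial_y$ is related to $\partial_x$ by the induced symmetry. Case 2 should then follow from case 1 via that isometry.
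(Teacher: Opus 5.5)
Your reduction to Theorem~\ref{th3} (frame components of $\partial_x,\dots,\partial_t$, Euler equations in cases 1, 3, 4, an elimination for case 2) is the natural route; the paper gives no argument for the corollary beyond Theorem~\ref{th3}. However, the substitution rule you commit to would make the computation fail. You read $X_{j;s}=e_3(X_j)=2t\,\partial_sX_j$ and $X_{j;tt}=e_4e_4(X_j)$, which copies a sentence in the proof of Theorem~\ref{th3}. That sentence is a slip: the equations of Theorem~\ref{th3} hold only when $X_{j;s},X_{j;ss},X_{j;t},X_{j;tt}$ are ordinary partial derivatives in $s,t$. You can see this already from $\nabla_{e_4}X=\sum_j 2tX_{j;t}e_j$. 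Recomputing confirms it: the $e_1$-component of $\overline{\Delta}X$ is $e_3^2X_1+e_4^2X_1-2e_4X_1+2e_3X_2-3X_1=4t^2(\partial_s^2+\partial_t^2)X_1+4t\,\partial_sX_2-3X_1$. Under your reading, case 1 becomes $16t^4X_1''+16t^3X_1'-3X_1=0$, and cases 3, 4 become $8t^4X''+8t^3X'-3X=0$. These are not equidimensional; their solutions are modified Bessel functions of $1/t$. So no indicial equation appears and the stated forms cannot come out.

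With partial derivatives, everything you predict goes through.

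In case 1, $X_1=t^{-1/2}\widetilde X_1$ gives $4t^2X_1''-3X_1=4t^{1/2}(t\widetilde X_1''-\widetilde X_1')$, with exponents $0,2$.

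In cases 3 and 4, $X=\widetilde X/(2t)$ gives $t^2\widetilde X''-2t\widetilde X'+\tfrac12\widetilde X=0$, with exponents $\tfrac{3\pm\sqrt7}{2}$.

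In case 2, write $u=\widetilde X_2$ and $\Delta_0=\partial_s^2+\partial_t^2$. The two equations reduce to $t^2\Delta_0u+tu_t+su_s=0$ and $st\Delta_0u+tu_s-su_t=0$. Taking $t$ times the second minus $s$ times the first gives $(t^2-s^2)u_s-2st\,u_t=0$, whose first integral is $\rho=t/(s^2+t^2)$. The characteristics are circles through the origin, connected in $t>0$, so $u=F(\rho)$ globally. Since $\Delta_0\rho=0$, $s\rho_s+t\rho_t=-\rho$ and $|\nabla\rho|^2=(s^2+t^2)^{-2}$, the first equation becomes $\rho^2F''-\rho F'=0$, so $F=c_1+c_2\rho^2$.

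Your fallback for case 2 is also sound, and shorter. The fibre part of $g$ is $|dx-z\,dy|^2/\mathrm{Im}\,z$ with $z=s+it$, so $(x,y,z)\mapsto(y,-x,-1/z)$ is an isometry of $(F^4,g)$. It sends $\partial_x$ to $-\partial_y$ and $\mathrm{Im}\,z$ to $t/(s^2+t^2)$. It also preserves both harmonic sections and the class of fields whose coefficients depend only on $(s,t)$, so case 2 follows directly from case 1.
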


The following theorem shows that, under these assumptions, the only harmonic vector field on $(F^4, g)$ as mapping is the trivial one.

\begin{theorem}
  Let $X=X_1(s,t)e_1+...+X_4(s,t)e_4$ be a smooth vector field on $F^4$. Then, $X$ is a harmonic map with respect to $g$ if and only if $X_i(s,t)=0$ for all $i=1,...,4$.
\end{theorem}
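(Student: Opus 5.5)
Since $X$ is a harmonic map into $(TF^4,g^S)$ exactly when both components of \eqref{TensionSasaki} vanish, I would treat the two conditions separately. The vertical condition $\mathrm{Tr}_g\nabla^2X=0$ is already written out in Theorem~\ref{th3}. The new ingredient is the horizontal condition
\[
\mathrm{Tr}_g R(X,\nabla\cdot)\cdot=\sum_{i=1}^4 R(X,\nabla_{e_i}X)e_i=\sum_{i,j=1}^4\theta_{ij}\,R(X,e_j)e_i=0,
\]
where $\theta_{ij}=g(\nabla_{e_i}X,e_j)$ is the matrix \eqref{theta}. The converse direction is trivial: $X=0$ is harmonic. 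So everything rests on the direct implication.

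\textbf{Step 1: curvature.} From Lemma~\ref{1} I would compute all components $R(e_a,e_b)e_c$. This uses the brackets $[e_a,e_b]=\nabla_{e_a}e_b-\nabla_{e_b}e_a$, e.g. $[e_1,e_4]=-e_1$, $[e_2,e_4]=e_2$, $[e_3,e_4]=-2e_3$, $[e_1,e_2]=0$, $[e_1,e_3]=-e_2$, $[e_2,e_3]=-e_1$. As a consistency check, the components must reproduce Lemma~\ref{2}, namely Ricci zero on $\mathrm{span}\{e_1,e_2\}$ and $-6$ on $\mathrm{span}\{e_3,e_4\}$.

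\textbf{Step 2: the horizontal equation.} Substituting into the double sum gives a vector whose four components are quadratic in the $X_k$ and their $e_3,e_4$-derivatives. The rows $i=1,2$ of \eqref{theta} contribute purely algebraic quadratic terms. The rows $i=3,4$ contribute terms of the form $X_k\,tX_{l;s}$ and $X_k\,tX_{l;t}$.

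Rather than solve these four PDEs directly, I would pair the horizontal vector with well-chosen fields. First I take its inner product with $X$. By skew-symmetry $g(R(X,Y)Z,X)=-g(R(X,Y)X,Z)$, so many derivative terms should combine or cancel. The aim is an identity of the form $\sum(\text{algebraic squares})=0$, for instance forcing $X_1^2+X_2^2$ or $X_3^2+X_4^2$ to vanish. If that fails, I would instead pair with $e_1,\dots,e_4$ individually. The goal is still algebraic relations such as $X_1X_3-X_2X_4=0$ or $X_1X_4+X_2X_3=0$ that pin down the $e_1,e_2$ part.

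\textbf{Step 3: feed into Theorem~\ref{th3}.} Once $X_1=X_2=0$ (or a comparable reduction) is obtained, the first two equations of Theorem~\ref{th3} are automatically satisfied. The remaining horizontal components should then force relations between $X_3$ and $X_4$. If $X_3=X_4=0$ does not follow directly, I would combine them with the last two equations of Theorem~\ref{th3}. The natural move is to pass to the complex function $Z=X_3+iX_4$, which turns those two equations into a single equation $2t^2(Z_{tt}+Z_{ss})+4itZ_s-3Z=0$. The horizontal constraints should then kill it.

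\textbf{Main obstacle.} I expect Step 2 to be the hard part: extracting a genuinely coercive sign condition from the curvature term in the presence of the derivative contributions from rows $3,4$ of \eqref{theta}. If no clean sum of squares appears, I would fall back on a case analysis over the zero sets of the algebraic relations. Analyticity of solutions of the elliptic system in Theorem~\ref{th3} would then propagate vanishing from an open set to all of $F^4$.
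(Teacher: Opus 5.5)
\textbf{There is a genuine gap.} Your plan has the same architecture as the paper: the vertical equations of Theorem~\ref{th3}, the four components of $H=\sum_i R(X,\nabla_{e_i}X)e_i$, and then the claim that the combined system has only the zero solution. The paper dispatches that last step as ``a direct analysis''. Your proposal does not supply this decisive step either, and neither mechanism you suggest can close it.

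Pairing with $X$ fails for a structural reason. $H$ is quadratic in $(X,\partial X)$, so $g(H,X)$ is odd under $X\mapsto -X$ and can never be a nontrivial sum of squares. Already for constant components, with $Z=X_1+iX_2$ and $W=X_3+iX_4$, it equals $3X_1\,\mathrm{Im}(ZW)-3X_2\,\mathrm{Re}(ZW)+X_4(2|Z|^2+8|W|^2)$, which is indefinite.

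Pairing with $e_j$ just returns the components $H_j$. Each contains first-order terms $tX_kX_{l;s}$, $tX_kX_{l;t}$ that do not cancel, so no algebraic relation like $X_1X_3-X_2X_4=0$ drops out. Even the positive term $2|Z|^2+8|W|^2$ in $H_4$ can be balanced by derivative terms: $Z=e^{is/(2t)}$, $W=0$ gives $H_4=0$. So Step~2 never produces the reduction $X_1=X_2=0$ that Step~3 relies on.

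Separately, Lemma~\ref{1} gives $[e_1,e_3]=0$ and $[e_2,e_3]=-2e_1$, not $-e_2$ and $-e_1$. Your values would corrupt $R(e_1,e_3)$ and $R(e_2,e_3)$.

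\textbf{The missing idea: the obstruction is integrability, not coercivity.} Write $D=\partial_t+i\partial_s$ and $\Delta=\partial_s^2+\partial_t^2$. The horizontal equations become
\[
2t\,(Z\,DW-W\,DZ)=3ZW,\qquad \mathrm{Im}(\bar ZZ_t)=2\,\mathrm{Im}(\bar WW_t),\qquad |Z|^2+4|W|^2=2t\,\mathrm{Im}(\bar ZZ_s)-4t\,\mathrm{Im}(\bar WW_s).
\]
The first is $H_2-iH_1=0$. The algebraic part of $H_2$ is $3X_2X_4-3X_1X_3$; the paper's display has a misprint there.

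\begin{itemize}
\item On an open set where $W\equiv 0\neq Z=re^{i\psi}$, the equations give $\psi_t=0$ and $\psi_s=1/(2t)$. These are incompatible, since $\partial_t(1/(2t))\neq 0$.
\item Where $Z\equiv 0\neq W=\rho e^{i\phi}$, they give $\phi_t=0$ and $\phi_s=-1/t$, which are likewise incompatible.
\item Where both are nonzero, the first equation says $W/(t^{3/2}Z)$ is holomorphic in $t+is$. Hence $m=|W|^2/|Z|^2$ satisfies $\Delta\log m=-3/t^2$. Off the nowhere dense set $\{m=1/2\}$, the other two equations become $\psi_t=-\mu_s$ and $\psi_s=\tfrac{1}{2t}+\mu_t$, with $\mu=-\tfrac12\log|1-2m|$. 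Their compatibility condition $\Delta\mu=1/(2t^2)$ makes $\log m$ an isoparametric function on the hyperbolic plane $\tfrac{ds^2+dt^2}{4t^2}$, with $\Delta_H\log m=-12$ and $|\nabla_H\log m|^2=2(1-2m)(1+4m)/m$. The Riccati equation $\kappa'+\kappa^2=4$ for its level curves then reduces to $(1-2m)^2=0$, a contradiction.
\end{itemize}

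So the horizontal equations alone force $X=0$, without needing analyticity or the vertical equations. An argument of this kind, rather than a hoped-for sum of squares, is what must replace your Step~2.
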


\begin{proof}
The non-vanishing components of the Riemannian curvature tensor $R$ of $(F^4,g)$ are given by
\begin{eqnarray}
g(R(e_{1},e_{2})e_{1},e_{2}) = -2, & g(R(e_{2},e_{4})e_{2},e_{4}) = 1, & g(R(e_{3},e_{4})e_{1},e_{2}) = -2,\nonumber\\
g(R(e_{1},e_{3})e_{1},e_{3}) = 1,  & g(R(e_{3},e_{4})e_{3},e_{4}) = 4, & g(R(e_{1},e_{3})e_{2},e_{4}) = -1,\nonumber\\
g(R(e_{1},e_{4})e_{1},e_{4}) = 1,  & g(R(e_{2},e_{3})e_{1},e_{4}) = 1, & g(R(e_{1},e_{4})e_{2},e_{3}) = 1,\qquad\;\; \label{RiemC}\\
g(R(e_{2},e_{3})e_{2},e_{3}) = 1,  & g(R(e_{2},e_{4})e_{1},e_{3}) = -1,& g(R(e_{1},e_{2})e_{3},e_{4}) = -2 .\nonumber
\end{eqnarray}
Combining (\ref{nabla}) with (\ref{RiemC}), a direct computation yields
\begin{eqnarray}
\sum_{i=1}^{4} g(R(X,\nabla_{e_i}X)e_i, e_1)
&=& 3X_3X_2+2tX_3X_{1;s}+3X_4X_1-2tX_4X_{2;s}-2tX_1X_{3;s}\nonumber\\
&& +2tX_2X_{4;s}+2tX_4X_{1;t}+2tX_3X_{2;t}-2tX_2X_{3;t}-2tX_1X_{4;t},\nonumber\\
\sum_{i=1}^{4} g(R(X,\nabla_{e_i}X)e_i, e_2)
&=& -2tX_4X_2+5X_4X_2+2tX_4X_{1;s}-3X_1X_3+2tX_3X_{2;s}-2tX_2X_{3;s}\nonumber\\
&& -2tX_1X_{4;s}-2tX_3X_{1;t}+2tX_4X_{2;t}+2tX_1X_{3;t}-2tX_2X_{4;t},\nonumber\\
\sum_{i=1}^{4} g(R(X,\nabla_{e_i}X)e_i, e_3)
&=& -4t\big(X_2X_{1;t}-X_1X_{2;t}-2X_4X_{3;t}+2X_3X_{4;t}\big),\nonumber\\
\sum_{i=1}^{4} g(R(X,\nabla_{e_i}X)e_i, e_4)
&=& 2X_2^2+4tX_2X_{1;s}+2X_1^2-4tX_1X_{2;s}+8X_4^2-8tX_4X_{3;s}\nonumber\\
&& +8X_3^2+8tX_3X_{4;s}. \label{eqs2}
\end{eqnarray}
By combining (\ref{TensionSasaki}), (\ref{nablanabla}), and (\ref{eqs2}), we deduce that the vector field $X$ defines a harmonic map if and only if its components satisfy the following system of partial differential equations
\begin{eqnarray*}
&&4t^2X_{1;tt}+4t^2X_{1;ss}+4tX_{2;s}-3X_1=0,\\
&&4t^2X_{2;tt}+4t^2X_{2;ss}-4tX_{1;s}-3X_2=0,\\
&&4t^2X_{3;tt}+4t^2X_{3;ss}-8tX_{4;s}-6X_3=0,\\
&&4t^2X_{4;tt}+4t^2X_{4;ss}+8tX_{3;s}-6X_4=0,\\
&&2tX_3(X_{1;s}+X_{2;t})+2tX_4(X_{1;t}-X_{2;s})-2tX_1(X_{3;s}+X_{4;t})\\
&&\quad +2tX_2(X_{4;s}-X_{3;t})+3X_3X_2+3X_4X_1=0,\\
&&2tX_4(X_{1;s}+X_{2;t}-X_2)+2tX_3(X_{2;s}-X_{1;t})\\
&&\quad -2tX_2(X_{3;s}+X_{4;t})+2tX_1(X_{3;t}-X_{4;s})+5X_4X_2-3X_1X_3=0,\\
&&X_2X_{1;t}-X_1X_{2;t}-2X_4X_{3;t}+2X_3X_{4;t}=0,\\
&&2X_2^2+4tX_2X_{1;s}+2X_1^2-4tX_1X_{2;s}+8X_4^2-8tX_4X_{3;s}+8X_3^2+8tX_3X_{4;s}=0.
\end{eqnarray*}
A direct analysis of this system shows that the unique solution is the trivial one,
\[
X_1 = X_2 = X_3 = X_4 = 0.
\]
\end{proof}

\subsection*{Funding} Not applicable.
\subsection*{Informed Consent Statement} Not applicable.
\subsection*{Data Availability Statement} Not applicable.
\subsection*{Conflicts of Interest} The authors declare no conflict of interest.

\end{document}